\documentclass[11pt,a4paper,final]{amsart}

\setlength{\oddsidemargin}{0cm}
\setlength{\evensidemargin}{0cm}
\setlength{\textheight}{21.7cm}
\setlength{\textwidth}{15cm}

\usepackage{amsmath}
\usepackage{paralist}
\usepackage{graphics}
\usepackage{graphicx}
\usepackage{epstopdf}
\usepackage[colorlinks=true]{hyperref}
\usepackage{amssymb}
\usepackage{amsthm}
\usepackage{color}
\usepackage{multirow}
\usepackage{booktabs}
\usepackage{bm}
\usepackage{mathrsfs}

\usepackage[ruled,vlined]{algorithm2e}

\newtheorem{proposition}{Proposition}

\newtheorem{remark}{Remark}

\begin{document}

\title[Utility indifference Option Pricing Model]{
Utility indifference Option Pricing Model with a Non-Constant Risk-Aversion under Transaction Costs and Its Numerical Approximation}

\author{Pedro P\'olvora${}^{1}$}
\author{Daniel \v{S}ev\v{c}ovi\v{c}${}^{1}$}
\address{${}^{1}$ Department of Applied Mathematics and Statistics, Faculty of Mathematics Physics and Informatics, Comenius University, Mlynsk\'a dolina, 842 48, Bratislava, Slovakia. Corresponding author: {\tt sevcovic@fmph.uniba.sk} }

\begin{abstract}
Our goal is to analyze the system of Hamilton-Jacobi-Bellman equations arising in derivative securities pricing models. The European style of an option price is constructed as a difference of the certainty equivalents to the value functions solving the system of HJB equations. We introduce the transformation method for solving the penalized nonlinear partial differential equation. The transformed equation involves possibly non-constant the risk aversion function containing the negative ratio between the second and first derivatives of the utility function. Using comparison principles we derive useful bounds on the option price. We also propose a finite difference numerical discretization scheme with some computational examples. 

\medskip
\noindent
2010 MSC. Primary: 45K05 · 35K58 · 34G20 · 91G20

\medskip
\noindent Key words and phrases: option pricing; utility indifference pricing; transaction costs; Hamilton-Jacobi-Bellman equation; penalty methods; finite difference approximation

\end{abstract}

\maketitle

\section{Introduction}

In the last century the world witnessed a tremendous change and evolution in almost every industry and the financial one is no exception. One of the aspects that evolved greatly in finance was the financial derivatives, which saw their usage grow exponentially.

A financial derivative is a contract between two parties where they agree to future financial exchanges and whose value depends on one more underlying assets. There are multiple types of these contracts and they are used extensively in many industries, both for hedging and speculation. 
Depending on the type of derivative and on the position (buyer vs. seller) they can be used to either limit or increase the financial exposure to a particular financial asset. Examples of uses of financial derivatives include: financial institutions transforming a pool of equally risky mortgages into multiple contracts with different specific risk profiles, international enterprises reducing their foreign exchange risk, investors increasing their exposure to the increase in price of a stock by buying \mbox{financial derivatives. }

Financial options, a particular type of financial derivatives, are contracts where the buyer has the option but not the obligation to transact an asset at predefined conditions such as price or time. 
A key aspect of financial options is that their value, or price, is dependent on the underlying assets and finding it is fundamental for trading and managing the option and requires some type of mathematical modeling.  Since the future payoff of the option is uncertain at the time of the trade it is required to price it with probabilistic and statistical considerations, and different approaches have been developed, examples include: Binomial Trees, Monte Carlo simulations, Black-Scholes/PDEs . This was a key piece in the birth of what is now called financial mathematics. 

The usage of these type of contracts is very diverse, and they are important not only for the financial industry but for virtually every industry. They make it possible to manage risk in a way that gives great financial flexibility to enterprises, consequently promoting economic and business growth and development. For these reasons, it is not surprising that the number and type of financial options issued and traded has grown immensely over the last decades. This growth required more and more elaborate models to accommodate the new complexity of the contracts. Also, the greater size and impact of the usage of these contracts made evident the first models widely used were not good enough. Recent very unfortunate economic events such as the sub-prime crisis in the United States in 2008 were partially caused by the misuse of financial derivatives, and this demonstrated the importance of properly modeling and understand these contracts. The research proposed in this document aims to study new mathematical models that take into account some often neglected features of financial markets.

The most well known model for pricing financial options is called the Black-Scholes (BS) model and, although still largely used, it has multiple shortfalls like the fact that it does not account for feedback effects or transaction costs. The BS model does not consider that a trade can have an impact on the price, however, it has been empirically verified that a very large trader (such as an investment bank) can affect the assets’ prices upon performing a large trade. Also, virtually every market has transaction costs or a different price for buying versus selling an asset (bid-ask spreads), which is not considered by the BS model as it assumes continuous cost-free trading to perfectly hedge the portfolio. Due to these shortfalls, in many situations, the Black-Scholes model is not sufficient for a robust application. Consequently a lot of research has been made on new models that extend Black-Scholes considering at least one of the previous characteristics. These extended models often result in non-linear pricing equations, which we introduce below.

The Leland  model \cite{leland1985option} was one of the first and most popular extensions of the Black-Scholes to accommodate transaction costs. This model assumes only discrete trading at a pre-specified time intervals as opposed to the Black-Scholes model where trading is made continuously. 
Following Leland's approach, a model involving variable transaction costs has been introduced and analyzed in \cite{sevcoviczitnanska}. For an overview of nonlinear option pricing models of the Leland type under transaction costs we refer to \cite{DS2017}.

A model that considers proportional transaction costs was introduced by Barles and Soner in \cite{barles}. The authors apply a utility maximization approach, where they consider an economic agent with a constant risk-aversion. Using asymptotic analysis where the transaction costs were taken to zero and the risk-aversion to infinity they found that, in the limit, price of an European style option is given by a PDE of the Black-Scholes type where the volatility nonlinearly depends on the Gamma of the option price. 

The concept of a utility function and even expected utility has been known and used for several decades in economics in general and its usage to price financial derivatives has gained a lot of momentum in recent years. The idea was first formulated by Hodges and Neuberger \cite{hodges1989optimal}, however, their work not fully formalized and mathematically proved which was then done by other authors such as Davis and Zariphopoulou \cite{davis} that proposed a numerical scheme for solving the equation. 
Barles and Soner worked on that same model and provided an analytical study by take asymptotic limits on the transaction costs and risk-aversion coefficient (c.f. \cite{barles}). This has become a very well-known model due to many practical reasons. Indifference pricing theory was presented in the book \cite{utilitybook} by Carmona and {\c C}inlar which covers, in a very deep and comprehensive matter, the subject of pricing via utility maximization.

In \cite{davis} the authors investigated the problem of pricing European options in a Black-Scholes model with proportional costs on stock transactions and they defined the option writing price as the difference between the utilities achievable by going into the market to hedge the option and by going into the market on one’s own account. Without transaction costs, this definition is shown to yield the usual Black-Scholes price. To compute the option price under transaction costs, one has to solve two stochastic control problems, corresponding to the two utilities compared above. The value functions of these problems are shown to be the unique viscosity solutions of one fully nonlinear quasi-variational inequality, with two different boundary and terminal conditions. 
They constructed a stable and  convergent discretization scheme based on the binomial approximation of the stock price process. 
A generalization of this model was done by Cantarutti \emph{et al.} \cite{Cantarutti2017}, where besides having proportional transaction costs,  the underlying stock price dynamics was considered to have the form of a general exponential L\'evy process. Numerical results are obtained by Markov chain approximation methods when the returns follow a Brownian motion and a variance gamma process. 

In \cite{Monoyios2004}, an efficient algorithm is developed to price European options in the presence of proportional transaction costs, using the optimal portfolio framework of Davis et al. in \cite{MDSbook}.  In this approach, the fair option price is determined by requiring that an infinitesimal diversion of funds into the purchase or sale of options has a neutral effect on achievable utility. 
This results in a general option pricing formula, in which option prices are computed from the solution of the investor's basic portfolio selection problem, without the need to solve a more complex optimisation problem involving the insertion of the option payoff into the terminal value function.
The option prices are computed numerically using a Markov chain approximation to the continuous time singular stochastic optimal control problem, for the case of exponential utility. Comparisons with approximately replicating strategies are made. The method results in a uniquely specified option price for every initial holding of stock, and the price lies within bounds which are tight even as transaction costs become large. A general definition of an option hedging strategy for a utility maximising investor is developed that involved calculating the perturbation to the optimal portfolio strategy when an option trade is executed.

In \cite{Kallsen2015}, asymptotic formulas for utility indifference prices and hedging strategies in the presence of small transaction costs were obtained. In \cite{Perrakis2000} Perrakis and Lefoll  derived optimal perfect hedging portfolios in the presence of transaction costs. In the paper \cite{Yan2021} the price of a European option with proportional transaction costs has been determined using a utility indifference approach where the resulting Hamilton-Jacobi-Bellman equation for the portfolio without option is two-dimensional instead of three-dimensional as in standard utility indifference approaches (c.f. \cite{davis}).

Furthermore, Li and Wang \cite{li2009penalty} study the application of the penalty method to solve the resulting variational inequality. Song Wang and Wen Li have published numerical results of an implementation of the penalty method to price for both American and European style options (c.f. \cite{Li:2014:NMP:2660593.2660608}). They considered exponential utility which is by far one of the most studied utilities but still slightly restrictive.

Our goal is to analyze the system of two Hamilton-Jacobi-Bellman (HJB) equations.The option price is constructed as a difference of the certainty equivalents to the value functions solving the system of HJB equations. We introduce a transformation method for solving the penalized nonlinear partial differential equation. The transformed equation involves possibly non-constant and non-zero risk aversion function containing the negative ratio between the second and first derivatives of the utility function. Using the parabolic comparison principles we derive useful bounds on the option price. We also propose a finite difference numerical discretization scheme with some computational examples. 

The paper is organized as follows. The next section is focused on generalization of the utility indifference option pricing model. We consider a general class of concave utility functions. A system of two Hamilton-Jacobi-Bellman equations is derived. The option price is then obtained in terms of  a difference of their solutions. In Section \ref{sec3} we present a transformation method for solving the penalized nonlinear partial differential equation. The penalized equation involves the risk aversion function. Section \ref{sec4} is devoted to construction of a numerical scheme which is based on time implicit backward Euler method in combination with an upwind finite difference method for spatial discretizations. The Hamilton-Jacobi-Bellman equations are solved by means of the penalty method utilizing the policy iteration method. It contains numerical examples of option prices for various concave utility functions. 

\section{Utility Indifference Option Pricing Model}
 
In economics, a utility function is a function measuring the economic agent's preferences on different goods. In a financial context the utility function is usually applied to monetary quantities, and it can be used to measure the agent's risk aversion when in a context of uncertainty.

The usual requirements for an utility function are that it is continuous and non-decreasing function. Additional properties such as concavity or convexity can be shown to be directly related with the investor's risk aversion (see below).

\subsection{Risk Aversion and the Concept of a Certainty Equivalent}
If an investor's wealth at a future time $T$ is affected by a source of uncertainty then it can be modeled by a random variable, say $W_T$ which we assume to have finite expectation $\mathbb{E}[W_T]=: w$. Then, we know from Jensen's inequality that if $U:\mathbb{R}\to \mathbb{R}$ is a concave function then  
\begin{equation*}
\mathbb{E}[U(W_T)]  -U(w)\leq  0.
\end{equation*}
This 
 difference can be seen as how much an investor prefers to hold an uncertain amount (which can turn out to be greater or lower that its average) or its \emph{average}. The greater the concavity of $U$ the greater that difference, and that leads one to define the Arrow-Pratt measure of absolute risk-aversion (also referred to as  the coefficient of risk-aversion),
\begin{equation*}
R(\xi) \equiv - U''(\xi)/U'(\xi).
\end{equation*}
For a concave increasing utility function $U$ we have $R\ge0$. Now, for pricing financial options one still needs one more concept, the concept of certainty equivalent which we denoted by $v$. It is defined as follows:
\begin{equation*}
v: U(v)=\mathbb{E}[U(W_T)], \quad v= U^{-1}(\mathbb{E}[U(W_T)]).
\end{equation*}
Throughout the paper we shall consider various types of utility functions with different risk-aversion profiles. Their profiles $U(x)$ and the risk aversion coefficients\linebreak $R(\xi)=-U''(\xi)/U'(\xi)$ are shown in Table~\ref{utilities}.

\begin{table}
\small 
\caption{Utility functions, their inverse functions and risk aversion functions.
\label{utilities} 
}
\setlength{\tabcolsep}{2.88mm}

\begin{tabular}{lllll}
\toprule
\textbf{Type} & \textbf{Utility Function} & \textbf{Parameter} &  \textbf{Inverse Utility Function} & \textbf{Risk Aversion} \\
\midrule
Linear      & $U(\xi)=\xi$                  &   ---        & $U^{-1}(y) = y$ & $R(\xi)=0$ \\
Exp.  & $U(\xi)= 1- e^{-\gamma \xi}$  &  $\gamma>0$  & $U^{-1}(y) = -\ln(1-y)/\gamma$ & $R(\xi)=\gamma$ \\
Power       & $U(\xi)= \xi^a$  &  $a<1$     & $U^{-1}(y) = y^{1/a}$ & $R(\xi)=(1-a) \xi^{-1}$ \\
Log. & $U(\xi)= \ln (b \xi +1)$      &  $b>0$     & $U^{-1}(y) = (e^y-1)/b$ & $R(\xi)=b/(b \xi+1)$ \\
\bottomrule
\end{tabular}
\end{table}

The power utility function $U(\xi)= \xi^a$ and the logarithmic utility function\linebreak  $U(\xi)= \ln (b \xi +1)$ belong to the wide class of the so-called decreasing absolute risk aversion (DARA) utility function characterized by a decreasing risk aversion coefficient $R(\xi)$ considered as a function of the wealth $\xi$. In the context of dynamic stochastic portfolio optimization the importance of DARA utility functions has been investigated in papers by Kopa \emph{et al.}\cite{Kopa} and Kilianov\'a and \v{S}ev\v{c}ovi\v{c} \cite{KilianovaKybernetika}.

\subsection{Utility Indifference Option Pricing Model under Transaction Costs}
To price a derivative in a market with proportional transaction costs using this framework we proceed as follows. 

Firstly, we consider that we have an investor that can invest in either a risky asset or a risk-free asset following the dynamics of a geometric Brownian motion and exponential deterministic process, respectively. That is
\begin{equation}
dS = \mu S \text{dt} + \sigma S\text{dw}_t,
\qquad 
dB = r B dt, 
\label{riskfree}
\end{equation}
where $\{w_t, t\ge0\},$ stands for the standard Wiener stochastic process. 

We model the transactions costs by introducing two different prices for the risky asset, depending on weather the investor is buying (ask price) or selling (bid price) the underlying asset,
\[  
S_{ask} = (1+\theta) S, \ S_{bid} = (1-\theta) S, 
\]
where $\theta = (S_{ask} - S_{bid})/(2 S)$ represents the bid-ask spread factor. 

We assume that the investor can  buy or sell shares of the risky asset (shares account $\alpha_t$) by increasing or decreasing his holdings in the risk-free  asset (money account $\beta_t$). We model the cumulative purchase of risky assets by a process $L_t$ and the sale by the process $M_t$. The goal of an investor is to maximize his terminal utility and will chose his trades $L_t$, $M_t$ accordingly. The resulting dynamics for the investors portfolio are, 
\begin{equation}
d\alpha = dL-dM, \qquad d\beta = r\beta dt - (1+\theta) S dL + (1-\theta) S dM.
\label{ms_account}
\end{equation}
Now, we can define the liquid wealth $W_t$ of the investor as follows:
\begin{equation*}
	W_t = \beta_t + S_t(\alpha_t - \theta |\alpha_t|).
\end{equation*}
Then the option pricing problem can be formulated as a stochastic control problem. Firstly, we introduce a portfolio in which the investor is optimizing their expected utility by trading either stock or bonds, 
\begin{equation}
  v^0 (\alpha,\beta,s,t)= \sup_{L,M} \mathbb{E} \left[ U(W_T) | \alpha,\beta,s\right].
\label{vf}
\end{equation}
Secondly, we consider that the investor has at his disposal another portfolio, which is equally comprised of risk-free and risky assets but also a short buyer position with $\delta=-1$ or long seller position with $\delta=+1$ on a derivative with the terminal payoff $C_T$. The value $v^\delta$ of this second portfolio is given by,
\begin{equation}
  v^\delta (\alpha,\beta,s,t)= \sup_{L,M} \mathbb{E} \left[ U(W_T + \delta C_T(s)) | \alpha,\beta,s\right].
\label{v1}
\end{equation}
Let us denote the certainty equivalents of portfolios by $z^\delta=z^\delta(\alpha,\beta,s,t)$ and\linebreak $z^0=z^0(\alpha,\beta,s,t)$, respectively. The functions $z^\delta, z^0$ satisfy the system of equations:
\begin{equation}
U(w-z^0)=v^0 , \qquad U(w-z^\delta)=v^\delta, \quad w = \beta  + s(\alpha - \theta |\alpha|).
\label{zf1}
\end{equation}
For further details of utility indifference option pricing we refer to the book \cite{utilitybook}, the price $V$ of an option with a payoff diagram $C_T$ is given as the discounted difference of certainty equivalents, i.e., 
\begin{equation}
V=e^{-r(T-t)} (z^\delta-z^0), \quad \text{where}\ \ z^\delta-z^0 = U^{-1}(v^0)- U^{-1}(v^\delta).
\label{priceV}
\end{equation}
In order to determine solutions $z^\delta$ and $z^0$ we have to solve a pair of stochastic optimal control problems for $v^\delta$, and $v^0$, by means of the dynamic programming principle. Mathematical representation leads to a system of two Hamilton-Jacobi-Bellman (HJB) equations that we introduce and analyze in the next subsection.

\subsection{Hamilton-Jacobi-Bellman Equations for Value Functions}

Following \cite{davis} the functions $v^\delta$ and $v^0$ satisfy the system of variational inequalities of the form:
\begin{equation}
\min\left(\mathcal{V}_A (v),\mathcal{V}_B (v), \mathcal{V}_C (v)\right)=0,
\label{minimalequation}
\end{equation}
where the linear differential operators $\mathcal{V}_A,\mathcal{V}_B,\mathcal{V}_C$ are defined as follows:
\[
\mathcal{V}_A \equiv \partial_t + \frac{\sigma^2}{2}s^2 \partial^2_{s}+\mu s \partial_s + r\beta  \partial_\beta,
\ \  
\mathcal{V}_B \equiv -\partial_\alpha+s(1+\theta)\partial_\beta,
\ \  
\mathcal{V}_C \equiv \partial_\alpha-s(1-\theta)\partial_\beta, 
\]
and  terminal conditions,
\begin{equation}
 v^0(\alpha,\beta,s,T)=U(w(\alpha,\beta,s)), \qquad  v^\delta(\alpha,\beta,s,T)=U(w(\alpha,\beta,s) +\delta C_T(s)).
\end{equation}
Here $C_T(S)$ denotes the prescribed pay-off diagram, i.e., $C_T(S)=(S-K)^+$ in the case of a plain vanilla call option, or $C_T(S)=(K-S)^+$ in the case of a put option. Here $K>0$ denotes the strike price, $(\beta)^+= \max(\beta,0)$, and $(\beta)^-=\min(\beta,0)$ denote the positive and negative parts of a real number $\beta$. Recall that the no-transaction region of values $(\alpha,\beta,s,t)$ is characterized by the equation $\mathcal{V}_A (v)=0$. The Buy and Sell regions correspond to equations $\mathcal{V}_B (v)=0$ and $\mathcal{V}_C (v)=0$, respectively (c.f. \cite{davis}). 

The minimal Equation (\ref{minimalequation}) is equivalent to the following linear complementarity problem for the functions $v=v^0$, and $v=v^\delta$:
\begin{equation}
\mathcal{V}_A (v) \geq 0,\ \mathcal{V}_B (v) \geq 0,\ \mathcal{V}_C (v) \geq 0,
\qquad 
\mathcal{V}_A (v)\cdot \mathcal{V}_B (v)\cdot \mathcal{V}_C (v)=0.
\label{lincomplementarity}
\end{equation}

\subsection{Penalty Method for Solving HJB Equations}

With penalty methods, the initial variational inequality is replaced by one single equation which has a term parameterized by a small parameter. One should prove that the solution of this new equation will converge to the initial one. Besides convergence to the initial problem, the perturbed equation’s solution will always respect the constrains posed by the initial problem. Next, introduce the specific implementation of the penalty method for our pricing model following \cite{li2009penalty}; \cite{Li:2014:NMP:2660593.2660608}. The penalty method has been successfully adopted for solving various nonlinear option pricing model by Lesmana and Wang \cite{lesmana}, or Chernogorova and Valkov \cite{chernogorova}, and others. The optimal time dependent penalty function has been proposed recently by Clevenhaus \emph{at al.} \cite{clevenhaus}.

Next we introduce the penalty method in a more detail. Let us define the following penalized perturbed equation for the function $v=v_{\lambda_B,\lambda_C}(\alpha,\beta,s,t)$, 
\begin{equation}
\mathcal{V}_A (v) + \lambda_B [\mathcal{V}_B (v)]^- +  \lambda_C [\mathcal{V}_C (v)]^-=0.
\label{penalized}
\end{equation}
Here  $\lambda_B,\lambda_C \gg 0$ are sufficiently large penalty parameters. 
In what follows, we will drop the subscripts for the sake of simplicity $v:=v_{\lambda_B,\lambda_C}$. In the limit $\lambda_B, \lambda_C\to \infty$ we formally deduce that the limiting solution $v$ solves the linear complementarity problem. Indeed, $\mathcal{V}_A (v) = - \lambda_B [ \mathcal{V}_B (v)]^- - \lambda_C[ \mathcal{V}_C (v)]^- \ge 0$, and $\mathcal{V}_B (v), \mathcal{V}_C (v)\ge 0$ in the limit $\lambda_B, \lambda_C\to \infty$. Taking $\lambda_B, \lambda_C\to \infty$ such that $\lambda_B/\lambda_C\to \infty$ we obtain $\mathcal{V}_A (v) \mathcal{V}_B (v) =0$. Similarly, $\mathcal{V}_A (v) \mathcal{V}_C (v) =0$.

\section{Transformation of the HJB Equation Involving Risk Aversion Function}\label{sec3}

For a general utility function $U$ we search the solution $v=v^\delta, \delta=0,\pm1$, in the following form:
\[
v(\alpha,\beta,s,t) = U(e^{\mu (T-t)} (w(\alpha,\beta,s) + \mathscr{A}(t) + \mathscr{V}(\alpha,\beta, s,t)) ),
\]
where
\[
\mathscr{A}(t) = \frac{\beta}{\mu}(r-\mu) ( 1- e^{-\mu(T-t)} )
\]
is a time dependent shift function such that  $\mathscr{A}(T)=0$. Notice that $\partial^2_s w=0, \partial_\beta w=1$, and $s \partial_s w = w-\beta$. Hence 
\begin{eqnarray*}
\mathcal{V}_A (e^{\mu (T-t)} (w + \mathscr{A} ) )
&=& e^{\mu (T-t)} ( -\mu(w+ \mathscr{A}) + \mathscr{A}^\prime + \mu(w-\beta) +r\beta)
\\
&=& e^{\mu (T-t)} ( \mathscr{A}^\prime  -\mu\mathscr{A}  + \beta(r-\mu)) = 0,
\end{eqnarray*}
because of the definition of the auxiliary function $ \mathscr{A}(t)$. For any function $z=z(\alpha,\beta,s,t)$ we have 
\begin{eqnarray*}
\mathcal{V}_A (U(z))&=& U^\prime(z) ( \partial_t z +\mu s \partial_s z + r\beta\partial_\beta z
+ \frac{\sigma^2}{2} s^2 \partial_s(U^\prime(z) \partial_s z))
\\
&=& U^\prime(z) \left( \mathcal{V}_A (z) - \frac{\sigma^2}{2} R(z) (s\partial_s z))^2\right), 
\end{eqnarray*}
where $R(z)=-U''(z)/U(z)\ge 0$ is the risk aversion function. Taking $z=e^{\mu (T-t)} (w + \mathscr{A} + \mathscr{V})$ we obtain 
\[
\mathcal{V}_A (v) =
\mathscr{F} (\alpha,\beta,s, t) \biggl( \mathcal{V}_A (\mathscr{V}) -\mu \mathscr{V} -  \frac{\sigma^2}{2} R(z) e^{\mu (T-t)}  (s\partial_s w +  s \partial_s\mathscr{V})^2 
\biggr), 
\]
where $\mathscr{F} (\alpha,\beta,s, t) = U'(z) e^{\mu (T-t)} >0$ is a positive factor. For $\mathcal{V}_B (w)$, and $\mathcal{V}_C (w)$ we have
\[
\mathcal{V}_B (w) = \left\{
\begin{array}{rl}
     2s \theta, & \ \ \text{if} \ \alpha>0,  \\
     0        , & \ \ \text{if} \ \alpha\le0,  
\end{array}
\right. \quad 
\mathcal{V}_C (w) = \left\{
\begin{array}{rl}
     0          , & \ \ \text{if} \ \alpha>0,  \\
     2s \theta  , & \ \ \text{if} \ \alpha\le0 .
\end{array}
\right.
\]
Furthermore, as $ \mathcal{V}_B (\mathscr{A})=\mathcal{V}_C (\mathscr{A})=0$, we have
\[
\mathcal{V}_B (v) = \mathscr{F} (\alpha,\beta,s, t) ( \mathcal{V}_B (w) +  \mathcal{V}_B (\mathscr{V}) ),
\quad 
\mathcal{V}_C (v) = \mathscr{F} (\alpha,\beta,s, t) ( \mathcal{V}_C (w) + \mathcal{V}_C (\mathscr{V}) ).
\]
A solution $\mathscr{V}=\mathscr{V}^\delta$ is subject to the terminal condition at $t=T$:
\begin{equation}
\mathscr{V}(\alpha,\beta,s,T) = \left\{
\begin{array}{rl}
     0,         & \ \ \text{if} \ \delta=0,  \\
     C_T(s), & \ \ \text{if} \ \delta=1, \ \ \text{(long seller position)}, \\
     -C_T(s), & \ \ \text{if} \ \delta=-1,\ \ \text{(short buyer position)}.  \\
\end{array}
\right.
\label{terminal}
\end{equation}
Summarizing, we deduce that the penalized problem (\ref{penalized}) can be reformulated in terms of the  function $\mathscr{V}$ as follows:
\begin{eqnarray}
\mathcal{V}_A (\mathscr{V}) - \mu\mathscr{V} 
&-&  \frac{\sigma^2}{2} R(z) e^{\mu (T-t)}  (s\partial_s w +  s \partial_s\mathscr{V})^2
\nonumber 
\\
&+& \lambda_B [\mathcal{V}_B (w + \mathscr{V})]^- +  \lambda_C [\mathcal{V}_C (w +\mathscr{V})]^-=0.
\label{penalized-V}
\end{eqnarray}

Recall that the option price $V$ is obtained as the difference between certainity equivalents $z^0$ and $z^\delta$. It means that 
\[
V = e^{-r(T-t)} (z^\delta-z^0) = e^{-r(T-t)}( U^{-1}(v^0)- U^{-1}(v^\delta)) 
= e^{(\mu-r)(T-t)}( \mathscr{V}^0- \mathscr{V}^\delta).
\]
In the next proposition we compare a solution to the system of transformed HJB equations with the explicit solution to the linear Black-Scholes equation:
\[
\partial_t \mathscr{V}+ \frac{\sigma^2}{2}s^2 \partial^2_{s} \mathscr{V}+ \mu s \partial_s \mathscr{V} - \mu\mathscr{V} = 0, \qquad \mathscr{V}(s,T) = \delta C_T(s), \quad \delta=0, \pm1.
\]
In the call option case where $C_T(s)=(s-K)^+$ the price $\mathscr{V}(s,t)=\mathscr{V}_{BS}(s,t)$ is  given by an explicit formula:
\[
\mathscr{V}_{BS}(s,t) = \delta\left( s \Phi(d_1) -  K e^{-\mu(T-t)} \Phi(d_2) \right), 
\]
where $d_{1,2} = (\ln(s/K) + (\mu\pm\sigma^2/2) (T-t))/(\sigma\sqrt{T-t})$ and $\Phi(d)=(2\pi)^{-1/2} \int_{-\infty}^d e^{-\xi^2/2} d\xi$. A similar formula is available for pricing of put options. 

\begin{proposition}\label{prop1}
Assume $U$ is an exponential ($\gamma>0$) or linear ($\gamma=0$) utility function, i.e., its risk aversion function $R(\xi)=-U''(\xi)/U'(\xi) \equiv \gamma$ where $\gamma\ge 0$ is a non-negative constant. 
Then the solution $\mathscr{V}$ of the penalized problem (\ref{penalized-V}) satisfying the terminal condition (\ref{terminal})  is independent of the factor $\beta$, i.e., $\mathscr{V}=\mathscr{V}(\alpha,s,t)$. Consequently, the option price $V=V(\alpha,s,t)$ is independent of $\beta$.~Moreover,
\[
\mathscr{V}(\alpha,s,t) \le \mathscr{V}_{BS}(s,t), \quad \text{for all}\ \alpha, s>0, t\in[0,T].
\]
\end{proposition}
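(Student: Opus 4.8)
The plan is to first use the constancy of the risk aversion function to remove the dependence on $\beta$, and then to recognise the penalized equation as a Black--Scholes differential inequality of subsolution type, to which a parabolic comparison principle applies.

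\emph{Removing $\beta$.} When $R\equiv\gamma$ is constant, the coefficient $R(z)$ in (\ref{penalized-V}) no longer depends on $z=e^{\mu(T-t)}(w+\mathscr{A}+\mathscr{V})$, so the $\beta$-dependent shift $\mathscr{A}(t)$ disappears from the equation altogether. I would then record that $s\partial_s w = w-\beta = s(\alpha-\theta|\alpha|)$ does not involve $\beta$, and that $\mathcal{V}_B(w),\mathcal{V}_C(w)$ depend only on $(\alpha,s)$. Inserting the ansatz $\mathscr{V}=\mathscr{V}(\alpha,s,t)$ into (\ref{penalized-V}) kills the term $r\beta\partial_\beta\mathscr{V}$ and reduces $\mathcal{V}_B(\mathscr{V})$ to $-\partial_\alpha\mathscr{V}$ and $\mathcal{V}_C(\mathscr{V})$ to $\partial_\alpha\mathscr{V}$; the resulting equation, together with the terminal data (\ref{terminal}), is a closed problem in $(\alpha,s,t)$ only. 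Since its solution extends trivially to a $\beta$-independent solution of the full problem (\ref{penalized-V})--(\ref{terminal}), uniqueness forces $\mathscr{V}=\mathscr{V}(\alpha,s,t)$, and then $V=e^{(\mu-r)(T-t)}(\mathscr{V}^0-\mathscr{V}^\delta)$ is independent of $\beta$ as well.

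\emph{Subsolution property and comparison.} Writing $\mathcal{L}=\partial_t+\frac{\sigma^2}{2}s^2\partial_s^2+\mu s\partial_s-\mu$ for the linear Black--Scholes operator, so that $\mathcal{L}\mathscr{V}_{BS}=0$, the reduced equation rearranges to
\[
\mathcal{L}\mathscr{V}
= \frac{\sigma^2}{2}\gamma\, e^{\mu(T-t)}\bigl(s(\alpha-\theta|\alpha|)+s\partial_s\mathscr{V}\bigr)^2
 - \lambda_B\bigl[\mathcal{V}_B(w)-\partial_\alpha\mathscr{V}\bigr]^-
 - \lambda_C\bigl[\mathcal{V}_C(w)+\partial_\alpha\mathscr{V}\bigr]^- .
\]
Every term on the right is nonnegative, since $\gamma\ge0$, squares are nonnegative, $\lambda_B,\lambda_C>0$ and $[\,\cdot\,]^-=\min(\cdot,0)\le0$; hence $\mathcal{L}\mathscr{V}\ge0$, i.e. $\mathscr{V}$ is a subsolution of the linear pricing equation. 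Setting $\psi=\mathscr{V}-\mathscr{V}_{BS}$ (with $\alpha>0$ entering $\mathcal{L}$ only as a parameter) we get $\mathcal{L}\psi\ge0$ on $(0,\infty)\times[0,T)$ and, by (\ref{terminal}), $\psi(\alpha,s,T)=\delta C_T(s)-\delta C_T(s)=0$. After the exponential rescaling $\psi=e^{-\mu(T-t)}\phi$, which removes the zeroth-order term, and the time reversal $\tau=T-t$, $\phi$ is a subsolution of a forward parabolic equation that is uniformly parabolic in $x=\ln s$ with vanishing initial datum; invoking the parabolic maximum principle (with the natural at-most-linear growth of $\mathscr{V}$ and $\mathscr{V}_{BS}$ in $s$ to control the behaviour as $s\to0^+$ and $s\to\infty$) gives $\phi\le0$, hence $\mathscr{V}(\alpha,s,t)\le\mathscr{V}_{BS}(s,t)$.

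\emph{Main obstacle.} The sign accounting above is elementary; the delicate points are the regularity and growth required for the comparison argument (and for the uniqueness used to remove $\beta$): one needs $\mathscr{V}$ to be a classical --- or at least a suitable viscosity --- solution, one must cope with the degeneracy of $\mathcal{L}$ at $s=0$, and with the non-smoothness introduced by $|\alpha|$ and by the negative-part penalties $[\,\cdot\,]^-$. If classical comparison is not available, I would run the last step in the viscosity-solution framework, using the smooth function $\mathscr{V}_{BS}$ as a supersolution.
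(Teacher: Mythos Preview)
Your proposal is correct and follows essentially the same approach as the paper: constancy of $R$ removes the $\beta$-dependence, nonnegativity of the right-hand side makes $\mathscr{V}$ a subsolution of the Black--Scholes operator $\mathcal{L}$, and a parabolic comparison with the matching terminal data yields the bound. The paper is terser --- it does not spell out the uniqueness argument for $\beta$-independence and, in place of your rescaling and growth discussion, simply invokes the Meyer--Needham maximum principle for parabolic inequalities on unbounded domains.
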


\begin{proof}
Notice that  $R(\xi)\equiv\gamma$, and $s\partial_s w, \mathcal{V}_B(w), \mathcal{V}_C(w)$, as well as the terminal condition~\mbox{(\ref{terminal})} are independent functions of the factor $\beta$. The penalized Equation (\ref{penalized-V}) can be rewritten \mbox{as follows:}
\begin{eqnarray}
&&\partial_t \mathscr{V} + \frac{\sigma^2}{2}s^2 \partial^2_{s} \mathscr{V}+ \mu s \partial_s \mathscr{V} + \mu \beta \partial_\beta \mathscr{V} - \mu\mathscr{V}
\label{penalized-V2}
\\
&& =  \frac{\sigma^2}{2} \gamma e^{\mu (T-t)}  (s\partial_s w +  s \partial_s\mathscr{V})^2
- \lambda_B [\mathcal{V}_B (w) + \mathcal{V}_B(\mathscr{V})]^- -  \lambda_C [\mathcal{V}_C (w) +\mathcal{V}_C(\mathscr{V})]^-.
\nonumber 
\end{eqnarray}
The right-hand side of (\ref{penalized-V2}) is nonnegative and it does not explicitly depend on $\beta$, so does the solution $\mathscr{V}=\mathscr{V}(\alpha,s,t)$. Furthermore,
\[
\partial_t \mathscr{V} + \frac{\sigma^2}{2}s^2 \partial^2_{s} \mathscr{V}+ \mu s \partial_s \mathscr{V}  - \mu\mathscr{V} \ge 0.
\]
As the Black-Scholes solution $\mathscr{V}_{BS}$ satisfies 
\[
\partial_t \mathscr{V}_{BS} + \frac{\sigma^2}{2}s^2 \partial^2_{s} \mathscr{V}_{BS}+ \mu s \partial_s \mathscr{V}_{BS} - \mu\mathscr{V}_{BS} = 0, 
\]
then, taking into account $\mathscr{V}(\alpha, s,T)=\mathscr{V}_{BS}(s,T)$, applying the maximum principle for parabolic equations on unbounded domains due to Meyer and Needham (\cite{Meyer}, \mbox{Theorem 3.4}), 
 we obtain the inequality  $\mathscr{V}(\alpha,s,t) \le \mathscr{V}_{BS}(s,t)$ for a given  parameter $\alpha$ and all \mbox{$s>0, t\in[0,T]$,} as claimed.
\end{proof}

The following proposition is a direct consequence of Proposition~\ref{prop1}.

\begin{proposition}\label{prop2}
Assume $U$ is the linear utility function, i.e., its risk aversion function $R(\xi)\equiv 0$. Suppose that there are no transaction costs, i.e., $\theta=0$.
Then the solution $\mathscr{V}$ of the penalized problem~(\ref{penalized-V}) satisfying the terminal condition~(\ref{terminal}) is independent of the factors $\alpha, \beta$, i.e.,\linebreak $\mathscr{V}=\mathscr{V}(s,t) = \mathscr{V}_{BS}(s,t)$, i.e., $\mathscr{V}$ is the Black-Scholes price of a European style option.
\end{proposition}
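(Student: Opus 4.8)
The plan is to apply Proposition~\ref{prop1} with $\gamma=0$ and then rule out the dependence on the remaining factor $\alpha$. Setting $\gamma=0$ in the reformulated penalized equation \eqref{penalized-V2}, the first (quadratic) term on the right-hand side vanishes identically, and by Proposition~\ref{prop1} we already know $\mathscr{V}=\mathscr{V}(\alpha,s,t)$ is independent of $\beta$. What remains is to show that the two penalty terms involving $\mathcal{V}_B(w)$, $\mathcal{V}_B(\mathscr{V})$, $\mathcal{V}_C(w)$, $\mathcal{V}_C(\mathscr{V})$ do not force any genuine $\alpha$-dependence when $\theta=0$.

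First I would observe that when $\theta=0$ the explicit formulas for $\mathcal{V}_B(w)$ and $\mathcal{V}_C(w)$ recorded just before Proposition~\ref{prop1} give $\mathcal{V}_B(w)=\mathcal{V}_C(w)=0$ for all sign choices of $\alpha$, since the nonzero branch equals $2s\theta=0$. Hence the penalized equation reduces to
\begin{equation*}
\partial_t \mathscr{V} + \frac{\sigma^2}{2}s^2 \partial^2_{s} \mathscr{V}+ \mu s \partial_s \mathscr{V} - \mu\mathscr{V}
= - \lambda_B [\mathcal{V}_B(\mathscr{V})]^- - \lambda_C [\mathcal{V}_C(\mathscr{V})]^-,
\end{equation*}
where, with $\theta=0$, $\mathcal{V}_B = -\partial_\alpha + s\partial_\beta$ and $\mathcal{V}_C = \partial_\alpha - s\partial_\beta$, so that $\mathcal{V}_C = -\mathcal{V}_B$. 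Next I would make the ansatz that $\mathscr{V}$ is in fact independent of $\alpha$; the terminal condition \eqref{terminal} with $\delta=0,\pm1$ is indeed $\alpha$-independent (it is either $0$ or $\pm C_T(s)$). For an $\alpha$-independent candidate we have $\mathcal{V}_B(\mathscr{V}) = \partial_\alpha(-\mathscr{V}) + s\partial_\beta \mathscr{V} = 0$ and likewise $\mathcal{V}_C(\mathscr{V})=0$ (recall $\mathscr{V}$ is already $\beta$-independent), so both penalty terms vanish and the equation collapses to the linear Black-Scholes equation
\begin{equation*}
\partial_t \mathscr{V} + \frac{\sigma^2}{2}s^2 \partial^2_{s} \mathscr{V}+ \mu s \partial_s \mathscr{V} - \mu\mathscr{V} = 0, \qquad \mathscr{V}(s,T)=\delta C_T(s),
\end{equation*}
whose unique solution is $\mathscr{V}_{BS}(s,t)$. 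Thus $\mathscr{V}(s,t)=\mathscr{V}_{BS}(s,t)$ is a solution of the penalized problem; by uniqueness of the solution to \eqref{penalized-V} (which we invoke, as elsewhere in the paper) it is the solution, and it is independent of $\alpha$ and $\beta$ as claimed. Finally, the option price formula $V = e^{(\mu-r)(T-t)}(\mathscr{V}^0 - \mathscr{V}^\delta)$ then gives $V$ as the Black-Scholes price, consistent with the classical fact that utility indifference pricing with a linear utility and no frictions returns the replication price.

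\textbf{Main obstacle.} The delicate point is the logical structure of the last step: showing not merely that the Black-Scholes function solves the penalized problem, but that it is \emph{the} solution, i.e., that no spurious $\alpha$-dependent branch is admissible. This relies on a uniqueness/comparison statement for \eqref{penalized-V}. One clean way to close this gap without a separate uniqueness theorem is to argue directly with the comparison principle of Meyer and Needham (\cite{Meyer}, Theorem~3.4), exactly as in the proof of Proposition~\ref{prop1}: for each fixed $\alpha$, the function $\mathscr{V}(\alpha,\cdot,\cdot)$ is a subsolution of the Black-Scholes equation (the penalty terms being nonpositive on the right-hand side after moving them over), giving $\mathscr{V}(\alpha,s,t)\le\mathscr{V}_{BS}(s,t)$; the reverse inequality $\mathscr{V}\ge\mathscr{V}_{BS}$ would follow if one can also show $[\mathcal{V}_B(\mathscr{V})]^- = [\mathcal{V}_C(\mathscr{V})]^- = 0$ a.e., i.e., that $\mathcal{V}_B(\mathscr{V})=\partial_\alpha(w+\mathscr{V})-s\partial_\beta(w+\mathscr{V})$ and its negative are both $\ge 0$, hence $\partial_\alpha\mathscr{V}=0$. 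Establishing this last sign information — that the optimal policy never triggers the penalty when trading is costless — is where the real content lies; the remainder is bookkeeping inherited from Proposition~\ref{prop1}.
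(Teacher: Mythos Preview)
Your argument is correct in spirit and reaches the same conclusion, but it takes a longer route than the paper and, in doing so, manufactures an obstacle that the paper's observation dispatches in one line. You note that with $\theta=0$ one has $\mathcal{V}_C=-\mathcal{V}_B$, and then set this aside to proceed by ansatz (guess $\mathscr{V}=\mathscr{V}_{BS}$, check it solves \eqref{penalized-V}, invoke uniqueness). The paper instead \emph{uses} the identity $\mathcal{V}_B(v)=-\mathcal{V}_C(v)$ as the whole engine: any solution of the complementarity problem \eqref{lincomplementarity} must satisfy $\mathcal{V}_B(v)\ge 0$ and $\mathcal{V}_C(v)\ge 0$, and since these are negatives of each other, both are forced to equal $0$. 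That kills the penalty terms for \emph{every} admissible solution, not merely for the Black--Scholes candidate, so there is no need to posit an ansatz or to appeal to a separate uniqueness theorem; the transformed equation \eqref{penalized-V} then collapses to the linear Black--Scholes equation and $\mathscr{V}=\mathscr{V}_{BS}$ follows.

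In short, the ``main obstacle'' you flag --- ruling out $\alpha$-dependent branches --- is exactly what the antisymmetry $\mathcal{V}_B=-\mathcal{V}_C$ handles directly. Your comparison-principle sketch at the end would also work, but it is heavier machinery than required: once you have written $\mathcal{V}_C=-\mathcal{V}_B$, simply combine it with the sign constraints rather than reaching for Meyer--Needham again.
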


\begin{proof}
Since $\theta=0$ we have $\mathcal{V}_B(v)=-\mathcal{V}_C(v)$ for any function $v$.  Hence $v$ is a solution to (\ref{lincomplementarity}) if and only if $\mathcal{V}_B(v)=\mathcal{V}_C(v)=0$ and $\mathcal{V}_A(v)\ge 0$. Therefore, a solution $\mathscr{V}$ to the penalized problem (\ref{penalized-V}) satisfies the linear Black-Scholes equation. Hence, $\mathscr{V}=\mathscr{V}(s,t) = \mathscr{V}_{BS}(s,t)$, \mbox{as claimed.}
\end{proof}

\section{Construction of a Numerical Discretization Upwind Finite Difference Scheme}\label{sec4}

In this section we propose a numerical discretisation scheme and several computational examples.  The scheme is based on the finite difference method proposed in \mbox{\cite{li2009penalty}}. The resulting scheme is of upwind type in the space discretization and the backward Euler implicit scheme in time. 

\subsection{Finite Difference Approximation of a Solution to the Penalized Problem}

We first introduce $\Omega^b$ the  truncated domain corresponding to the solvency region where $\beta+S(\alpha-\theta|\alpha|) >0$ as follows:
\[
\Omega^b =\{ (\alpha,\beta,S) 
\in (L^-_\alpha, L^+_\alpha) \times (L^-_\beta, L^+_\beta) \times (0, S^+): \beta+S(\alpha-\theta|\alpha|) >0 \}.
\]
We consider a simple 3D uniform mesh grid:
\[
(\alpha_i, \beta_j, S_k)\in\Omega^b, \quad i=0, \cdots, N_\alpha,\quad 
j=0, \cdots, N_\beta, \quad k=0, \cdots, N_S,
\]
\[
\alpha_i = L^-_\alpha + i h_\alpha, \quad  \beta_j = L^-_\beta + j h_\beta, \quad  S_k =  k h_S,
\]
\[
h_\alpha = (L^+_\alpha - L^-_\alpha)/N_\alpha,
\quad
h_\beta = (L^+_\beta - L^-_\beta) /N_\beta,
\quad
h_S = S^+/N_S,
\]
where $N_\alpha, N_\beta$, and $N_S$ are the numbers of discretization steps in the $\alpha,\beta$, and $S$ variables. Notice that the spatial discretization can be easily adopted to a non-uniform grid, e.g., by considering uniform discretization for the logarithmic variable $x_k=\ln(S_k/K)$. We consider a uniform time discretization with time steps $n \Delta t$ for $n=N, \cdots, 1,0$, where $\Delta t = T/N$, and $N$ is the number of time discretization steps. The solution $v = v(\alpha,\beta,S,t)$ will be discretized by the value $V^n_{ijk}$ at $(\alpha_i, \beta_j, S_k)$ and time $t= n \Delta t$.

We define the following finite difference discretization operators:
\begin{equation}
\begin{aligned}
\label{discretization operators}
&& D_t V_{ijk}^n = \frac{V_{ijk}^{n+1}-V_{ijk}^{n}}{\Delta t},
\quad D_\alpha^\pm V_{ijk}^n = \pm \frac{V_{(i\pm 1)jk}^{n}-V_{ijk}^{n}}{h_\alpha}, \quad 
D_\beta^\pm V_{ijk}^n = \pm \frac{V_{i(j\pm1)k}^{n}-V_{ijk}^{n}}{h_\beta}.
\\
&&
D_S  V_{ijk}^n = \frac{V_{ij(k+1)}^{n}-V_{ijk}^{n}}{h_S}, 
\  D_{SS}  V_{ijk}^n = \frac{V_{ij(k+1)}^{n}-2V_{ijk}^{n}+V_{ij(k-1)}^{n}}{h_S^2},
\end{aligned}
\end{equation}
\begin{equation}
\label{discreteL}
\begin{split}
&\mathcal{L_A}V_{ijk}^n = -\left( D_t +r (\beta_j)^+ D_\beta^+ + r (\beta_j)^- D_\beta^-  + \mu S_k D_S + \frac{\sigma^2}{2}S_k^2 D_{SS}\right) V_{ijk}^n,  
\\
&\mathcal{L_B}V_{ijk}^n = \left( -D_\alpha^+ + (1+\theta)S_k D^-_\beta\right) V_{ijk}^n, \quad 
\mathcal{L_C}V_{ijk}^n =  \left( D_\alpha^- -(1-\theta)S_k D^+_\beta\right) V_{ijk}^n. 
\end{split}
\end{equation}
Clearly, for any $\lambda>0$, and $\mathcal{L}\in\mathbb{R}$, we have 
\[
\lambda [\mathcal{L}]^- 
= \min_{m\in[0,\lambda]} m\mathcal{L}
= \left\{
\begin{array}{rl}
     0,                  & \ \ \text{if} \ \mathcal{L}>0,  \\
     \lambda\mathcal{L}, & \ \ \text{if} \ \mathcal{L}\le0.  
\end{array}
\right.
\]
The numerical discretization scheme is then given by:
\begin{equation}
\mathcal{L_A} V_{ijk}^n + \min_{\bar m \in [0,\lambda_B]}\bar m \mathcal{L_B} V_{ijk}^n + \min_{\bar n \in [0,\lambda_C]}\bar n \mathcal{L_C} V_{ijk}^n = 0, \quad \forall i,j,k.
\label{system}
\end{equation}

\noindent Terminal conditions. 

\noindent For the last terminal time level $n=N$ we have, for the call (put) option case with the pay-off diagram $C_T(S)=(S-K)^+ \ ( C_T(S)=(K-S)^+)$,
\[
V_{ijk}^N =  U(\beta_j + S_k (\alpha_i-\theta |\alpha_i|) + \delta C_T(S_k) ), \ \text{for} \ for (\alpha_i, \beta_j, S_k)\in\Omega^b.
\]

\noindent Boundary conditions.
\\noindent We apply the Dirichlet boundary conditions, i.e.,
\[
V_{ijk}^n = U(\beta_j + S_k (\alpha_i-\theta |\alpha_i|) + \delta C_T(S_k) ), 
\ \text{for}\  (\alpha_i, \beta_j, S_k)\in\partial \Omega^b, \ n=N-1,\cdots,1,0.
\]
Here we set $\delta=0$ in the case of numerical approximation of the value function $v^0$, and $\delta=\pm 1$ in the case of approximation of the value function $v^\delta, \delta=\pm 1$. 

Next, we present the full numerical discretization algorithm involving the policy iteration method for solving the penalized PDE (\ref{system}).
Notice that $(\beta_j)^+ + (\beta_j)^- = \beta_j$. It yields the following system of linear equations for the unknown vector $V^n$ for \mbox{$n=N-1, \cdots, 1, 0$}, 
\begin{eqnarray}
\label{linearsystem}
&& 	
\biggl[1 +\Delta t\left( 
\frac{r}{h_\beta} \beta_j +\frac{\mu}{h_S} S_k +\frac{\sigma^2}{h_S^2}S^2_k 
\right)
\biggr] 
V^{n,p+1}_{ijk}
\nonumber
\\
&&
-\biggr[ \frac{\mu \Delta t}{h_S} S_k +\frac{\sigma^2 \Delta t}{2h_S^2} S_k^2\biggl] V^{n,p+1}_{ij(k+1)}
-\frac{\sigma^2 \Delta t}{2h_S^2} S^2_k  V^{n,p+1}_{ij(k-1)}
\nonumber
\\
&& 	
+\Delta t\biggl[
\tilde{m}\left(\frac{1}{h_\alpha}+\frac{(1+\theta)}{h_\beta}S_k\right)
+ \tilde{n}\left(\frac{1}{h_\alpha}+\frac{(1-\theta)}{h_\beta}S_k\right)\biggr]
V^{n,p}_{ijk}
\\
&&
-\Delta t \biggl[ \frac{r\beta_j^+}{h_\beta}+\tilde{n}\frac{(1-\theta)}{h_\beta} S_k \biggl] V^{n,p}_{i(j+1)k}
- \Delta t\biggl[ \frac{r\beta_j^-}{h_\beta}+\tilde{m}\frac{(1+\theta)}{h_\beta}S_k \biggr] V^{n,p}_{i(j-1)k}
\nonumber 
\\
&& -\tilde{m}\frac{\Delta t}{h_\alpha} V^{n,p}_{(i+1)jk} 
-\tilde{n}\frac{\Delta t}{h_\alpha} V^{n,p}_{(i-1)jk}
 = V^{n+1}_{ijk},
 \nonumber
\end{eqnarray}
where $p=0,\cdots, p_{max}$ is the policy iteration parameter, $\tilde m=\tilde m^{n,p}_{ijk}, \tilde n=\tilde n^{n,p}_{ijk}$ are arguments of the minimum in (\ref{system}).  The above system of linear equations for the unknown stacked vector $V=(V^{n,p+1}_{ijk})$ can be rewritten as a system of linear equations of the form $\mathcal{A} V = b$ where $\mathcal{A}$ is a sparse matrix with at most $3$ nonzero elements in each row. The right-hand side vector $b$ consists of the known vector $V^{n+1}$ complemented by the boundary conditions.
It is important to note that the coefficients of the matrix $\mathcal{A}$  depend on the coefficients $\tilde{m},\tilde{n}$ which has to be computed within each policy iteration step. The full algorithm for the computation of the value function is as in the Algorithm~\ref{algorithm}. 

\begin{algorithm}
\caption{The algorithm for computing the value function $V$ for $\delta=0,\pm 1$.}\label{algorithm}
\small
\SetAlgoLined

Initialization of model parameters and numerical parameters $p_{max}, tol_{max}>0$\;

Compute the terminal conditions for $n=N$.
\\
Set $V^N_{i,j,k}=U(w_{i,j,k} + \delta C_T(S_k))$ for each $i,j,k$\;
Set $n=N-1$;

 \While{$n>0$}{
  Initiate policy iteration $p=0$ with $V^{n,0}_{ijk}= V^{n+1}_{ijk}$\;
 
 Compute the right-hand side vector $b$ from $V^{n+1}_{ijk}$ and boundary conditions \;
 
 \While{$p<p_{max}$ and $tol \ge tol_{max}$ }{

Compute the penalty terms. Set $\tilde m^{n,p}_{ijk}=0, \tilde n^{n,p}_{ijk}=0$ 

\If{$\mathcal{V}_B (V^{n,p}_{ijk})<0$}{$\tilde m^{n,p}_{ijk}=\lambda_B$} 

\If{$\mathcal{V}_C (V^{n,p}_{ijk})<0$}{$\tilde n^{n,p}_{ijk}=\lambda_C$}

Compute the elements of the matrix $\mathscr{A}$\;

Solve the linear system of equations $\mathscr{A} V = b$\;

Compute the difference $tol = \max|V^{n,p}_{ijk} -  V_{ijk}|$\; 

$V^{n,p+1}_{ijk} \gets V_{ijk}$\; $p \gets p+1$
}
 
 $n \gets n-1$\;
 }

\end{algorithm}

\begin{remark}\label{rem-1}
Our numerical scheme is based on solving the system of linear Equation (\ref{system}) for the unknown stacked vector $(V_{ijk}^n)$. Its matrix representation contains at most 3 nonzero elements in each row. It has a block matrix structure with $N_\alpha\times N_\beta$ tridiagonal $N_S\times N_S$ matrices on the block diagonal. For each $N_S\times N_S$ tridiagonal we can employ the fast Thomas algorithm with time complexity $O(N_S)$. The overall complexity of computation of the system is therefore $O(N_\alpha\times N_\beta\times N_S \times p_{max})$ where $p_{max}$ is the maximal number of policy iterations. 
Recall that there are other fast and robust numerical methods for solving problems of the form (\ref{system}). Among them  there are alternating direction explicit (ADI) methods for linear, nonlinear and multi-dimensional Black-Scholes models (c.f. \cite{BEGP2017} and references therein).
\end{remark}

\subsection{Results of Numerical Approximation of Option Prices}

In this part, we present results of computation of European style call options for the exponential utility function with a risk parameter $\gamma>0$ and linear utility function. 

The model and numerical parameters used can be found in Table \ref{parameters}. We used the exponential mesh $S_k= K \ln(x_k)$ where $\{x_k, k=1,\cdots,  N_S\}$ is an equidistant mesh of the interval $[K/2, 2K]$. The plot of the call option price as a function of the underlying asset price $S$ is shown in Figure~\ref{fig-linear-exponential}, for the buyer call option prices, i.e., $\delta=-1$. We used Matlab framework for computation of the solution on 3GHz Intel single Core machine. For numerical discretization parameters shown in Table~\ref{parameters} The computational time was 8.1~sec per one policy iteration. According to Remark~\ref{rem-1} the overall complexity is of the order $O(N_\alpha\times N_\beta\times N_S \times p_{max})$. 


\begin{table}
\small \caption{Model and numerical parameters of the numerical solution.
\label{parameters}}
\setlength{\tabcolsep}{4.15mm}

\begin{tabular}{llclcl}
\toprule
\textbf{Model Parameters} & \textbf{Value} &\textbf{Num Params} & \textbf{Value} &\textbf{Num Params} & \textbf{Value} \\
\midrule
 Strike price $K$           &  50   & $N_\alpha$   & 6     &  $N_S$ & 100 \\
 Transaction costs $\theta$ & 0.01  & $L_\alpha^-$ & 0.2   &  $S^+$ & 100  \\
 Volatility $\sigma$        & 0.3   & $L_\alpha^+$ & 0.6   & $\lambda_B=\lambda_C$   & 10  \\
 Risk-free rate $r$         & 0.05  & $N_\beta$     & 6    &  $N$   &  10 \\
 Drift $\mu$                & 0.1   & $L_\beta^-$   & $-$100 &  $T$   & 1  \\
  Risk-aversion $\gamma$    & 0.1   & $L_\beta^+$   & 100  &   &   \\
\bottomrule
\end{tabular}

\end{table}

\vspace{-6pt}

\begin{figure}
\centering
	\includegraphics[width=0.45\textwidth]{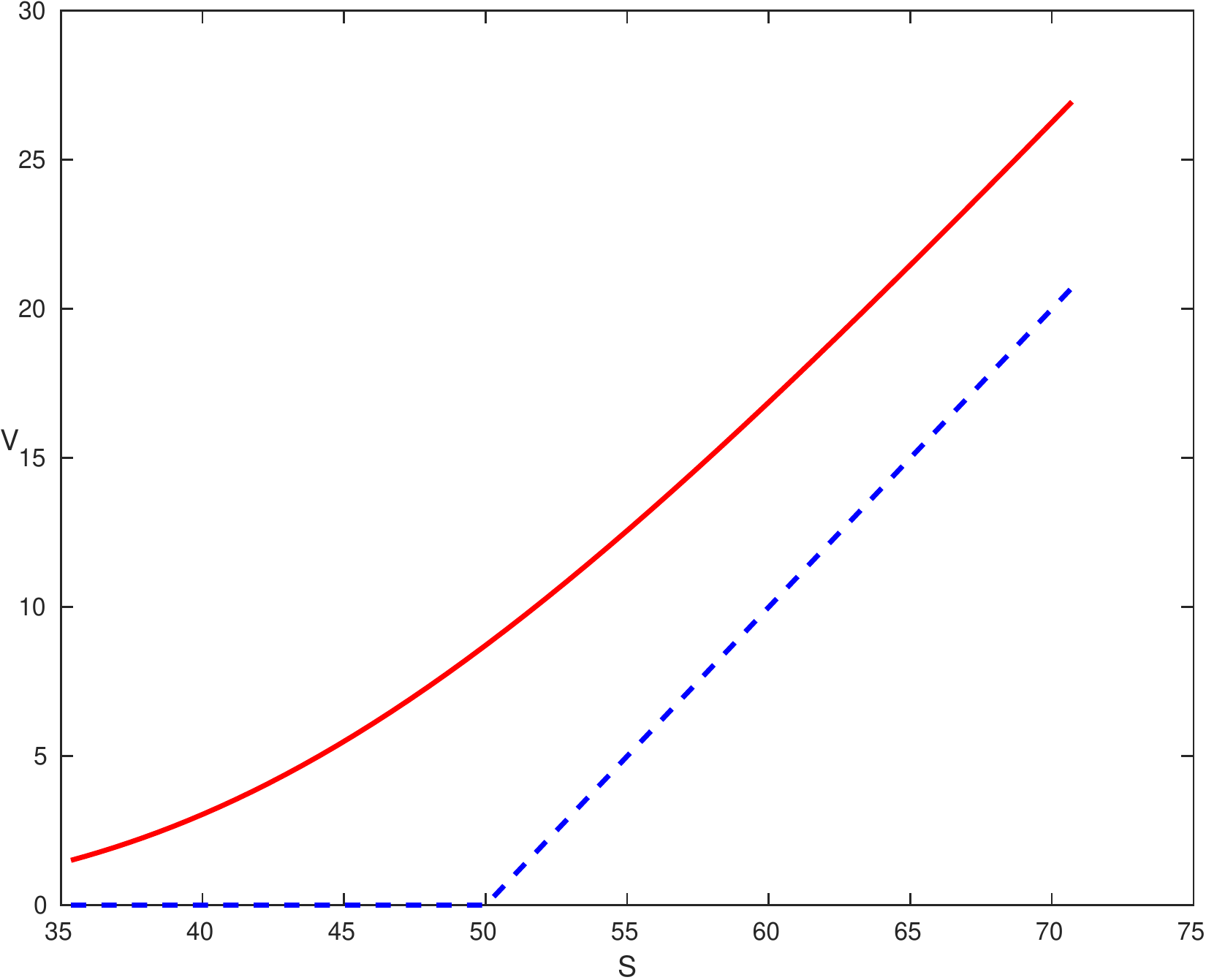}
 	\includegraphics[width=0.45\textwidth]{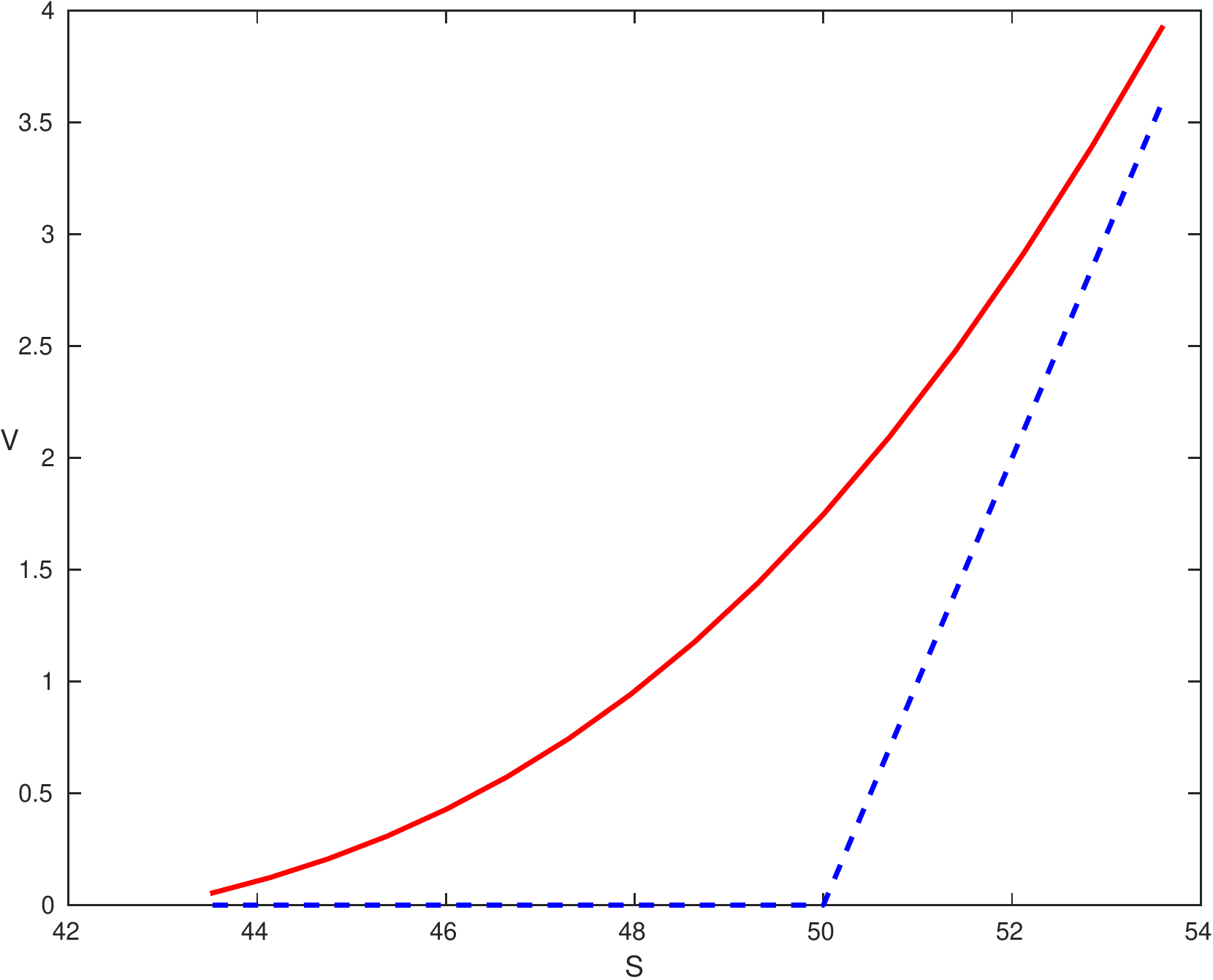}

	\caption{Linear (\textbf{left}) and exponential (\textbf{right}) utility indifference call option buyer price as a function of the underlying asset price $S$ with the wealth function parameters  $\alpha=0.467, \beta=33.3$. }
 	\label{fig-linear-exponential}
\end{figure}

\section{Conclusions}
In this paper we investigated and  analyzed the system of Hamilton-Jacobi-Bellman equations arising in pricing financial derivatives. We followed the utility indifference option pricing model in which the option price is constructed as a difference of the certainty equivalents to the value functions solving the system of HJB equations. We analyzed solutions to the transformed nonlinear partial differential equation involving a possibly non-constant risk aversion function. Useful bounds on the option price were obtained using parabolic comparison principle.  We also proposed a finite difference numerical discretization scheme. Various computational examples were also presented.

\vspace{6pt} 

\noindent{\bf Acknowledgments}
The authors gratefully acknowledge the contribution of the Slovak Research and Development Agency under the project APVV-20-0311.


\end{document}